\title[An elementary abelian $p$-cover of the Hermitian curve]{An elementary abelian $p$-cover of the Hermitian curve with many automorphisms}
\author{Herivelto Borges \and Satoru Fukasawa}
\subjclass[2020]{14H37, 14H05, 14G15}
\keywords{automorphism group, positive characteristic, Artin--Schreier curves} 
\address{Universidade de S\~ao Paulo, Inst. de Ci\^encias Matem\'aticas
 e de Computa\c c\~ao, S\~ao Carlos, SP 13560-970,  Brazil.} 
\email{hborges@icmc.usp.br}
\address{Department of Mathematical Sciences, Faculty of Science, Yamagata University, Kojirakawa-machi 1-4-12, Yamagata 990-8560, Japan.}
\email{s.fukasawa@sci.kj.yamagata-u.ac.jp}
\thanks{The second author was partially supported by JSPS KAKENHI Grant Number JP19K03438}
\newtheorem{thm}{Theorem}
\newtheorem{prop}{Proposition}
\newtheorem{lem}{Lemma}
\theoremstyle{definition}
\newtheorem{rem}{Remark}
\begin{document}
\begin{abstract}
The full automorphism group of a certain elementary abelian $p$-cover of the Hermitian curve in characteristic $p>0$ is determined. 
It is remarkable that the order of Sylow $p$-groups of the automorphism group is close to Nakajima's bound in terms of the $p$-rank. 
Weierstrass points, Galois points, Frobenius nonclassicality, and  arc property are also investigated. 
\end{abstract}
\maketitle

\section{Introduction} 
Let $k$ be an algebraically closed field of characteristic $p \ge 0$. The automorphism group ${\rm Aut}(X)$ of a smooth projective curve $X$ of genus $g_X \ge 2$ over $k$ has been studied by many authors. 
When $p=0$, there is a well-known bound of $|{\rm Aut}(X)|$ by Hurwitz: $|{\rm Aut}(X)| \le 84(g_X-1)$.  
When $p>0$, there are  many examples of curves $X$ for which  $|{\rm Aut}(X)| >84(g_X-1)$. 
In many of these  cases, the $p$-rank $\gamma_X$ of $X$ is zero. 
It is known that $\gamma_X \le g_X$ \cite{hasse-witt}, and  curves $X$  for which $\gamma_X=g_X$ and $|{\rm Aut}(X)|>84(g_X-1)$ were first presented by Subrao \cite{subrao}. 
The relation between $\gamma_X$ and ${\rm Aut}(X)$ was investigated by Nakajima \cite{nakajima}. 
In positive characteristic, a significant problem in the study of algebraic curves  with  large automorphism groups is that of constructing  a family of curves $X$ such that $\gamma_X>0$ and $|{\rm Aut}(X)|/g_X \rightarrow \infty$ as $g_X  \rightarrow \infty$  (see \cite{giulietti-korchmaros1, giulietti-korchmaros2}).

Hereafter, let us  assume that $p>0$ and $q$ is a power of $p$. 
This article considers  the smooth model $X_n$ of the plane curve $C_n \subset \mathbb{P}^2$ of degree $q^{2n}(q+1)$ of affine equation
$$ \left(\sum_{i=0}^n\alpha_i x^{q^{2i}}\right)^{q+1}+\left(\sum_{i=0}^n\alpha_i y^{q^{2i}}\right)^{q+1}+c=0, $$
where $\alpha_n=1$, $\alpha_0,c \in k \setminus \{0\}$, and $\alpha_1, \ldots, \alpha_{n-1} \in k$.  
It is proved  that $\gamma_{X_n}>0$ and $|{\rm Aut}(X_n)|/g_{X_n} \rightarrow \infty$ as $g_{X_n} \rightarrow \infty$, as a corollary of Theorem \ref{main1} (stated below). 
This curve belongs to a family of plane curves with two Galois points, which was introduced by the second author \cite{fukasawa2}. 
A point $P \in \mathbb{P}^2 \setminus C$ is called an outer Galois point for a plane curve $C \subset \mathbb{P}^2$ if the function field extension $k(C)/\pi_P^*k(\mathbb{P}^1)$ induced by the projection $\pi_P$ from $P$ is Galois (see \cite{miura-yoshihara, yoshihara}). 
This study  proves that  further Galois points for $C_n$ exist.
In addition, several important properties of $X_n$ are investigated, with  
the following two theorems  as the main results. 

\begin{thm} \label{main1} 
For the curve $X_n$, the following hold.  
\begin{itemize}
\item[(a)] The genus $g_{X_n}$ of $X_n$ is $\frac{q^{2n}(q+1)(q^{2n+1}-2)}{2}+1$. 
\item[(b)] The $p$-rank $\gamma_{X_n}$ of $X_n$ is $q^{4n+1}-q^{2n+1}-q^{2n}+1$. 
\item[(c)] ${\rm supp}(D)=\{ P \in X_n \ | \ H(P) \ni q^{2n+1}\}$, where $D$ is the divisor of $X_n$ arising from $C_n \cap \{Z=0\}$ and $H(P)$ is the Weierstrass semigroup of $P$.  
\item[(d)] $|{\rm Aut}(X_n)| = q^{4n+1}(q^2-1)(q+1)$. 
In more detail, there exists an exact sequence
$$ 1 \rightarrow \Sigma_0 \rtimes (\mathbb{Z}/(q+1) \mathbb{Z}) \rightarrow {\rm Aut}(X_n) \rightarrow PGL(2, \mathbb{F}_q) \rightarrow 1 $$
of groups, where $\Sigma_0=\{(x, y) \mapsto (x+\beta, y+\gamma) \ | \ \sum_i\alpha_i\beta^{q^{2i}}=\sum_i\alpha_i\gamma^{q^{2i}}=0\}$.  
\item[(e)] There exist exactly $q^2-q$ outer Galois points for $C_n$. 
\item[(f)] The group ${\rm Aut}(X_n)$ is generated by all automorphisms coming from outer Galois points.  
\end{itemize}
\end{thm}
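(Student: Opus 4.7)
The plan rests on exhibiting $X_n$ as an elementary abelian $p$-cover of the Hermitian curve. Write $f(T)=\sum_{i=0}^n\alpha_i T^{q^{2i}}$; since $\alpha_0\ne 0$, this is a separable additive polynomial of degree $q^{2n}$ with kernel $V\subset k$ of order $q^{2n}$. The substitution $u=f(x)$, $v=f(y)$ then presents $\phi\colon X_n\to\mathcal{H}$ as a Galois cover of the Hermitian curve $\mathcal{H}:u^{q+1}+v^{q+1}+c=0$ (of genus $q(q-1)/2$ and $p$-rank $0$), with Galois group $\Sigma_0=V\times V$ acting by $(x,y)\mapsto(x+\beta,y+\gamma)$. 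Since $(f,f)\colon\mathbb{A}^2\to\mathbb{A}^2$ is étale, $\phi$ is unramified over the affine locus, and all ramification sits above the $q+1$ places of $\mathcal{H}$ at infinity. A local analysis at each such place (using a uniformizer of $\mathcal{H}$ and the Artin--Schreier-type relation $f(x)=u$) should produce $q^{2n}$ places of $X_n$ over each infinity place of $\mathcal{H}$, each wildly ramified of index $q^{2n}$ and with differential exponent $2(q^{2n}-1)$.

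With this ramification data, Riemann--Hurwitz for $\phi$ gives $2g_{X_n}-2=q^{4n}(q^2-q-2)+(q+1)q^{2n}\cdot 2(q^{2n}-1)$, which simplifies to (a). The Deuring--Shafarevich formula (valid for elementary abelian $p$-covers) yields $\gamma_{X_n}-1=-q^{4n}+(q+1)(q^{4n}-q^{2n})$, i.e., (b). For (c), $\mathrm{supp}(D)$ is exactly the $(q+1)q^{2n}$ places of $X_n$ above the infinity places of $\mathcal{H}$; at such a place $P$, a suitable combination of $f(x)$ and $f(y)$ (using the $\mu_{q+1}$-action on the infinity fiber of $\mathcal{H}$) produces a function with pole divisor $q^{2n+1}P$, so $q^{2n+1}\in H(P)$, whereas for an affine place a direct computation in the affine coordinate ring excludes $q^{2n+1}$ as a pole order.

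Part (d) requires both lower and upper bounds on $|\mathrm{Aut}(X_n)|$. The lower bound arises from three explicit families of automorphisms: (i) the translations $\Sigma_0$, of order $q^{4n}$; (ii) diagonal scalings $(x,y)\mapsto(\zeta x,\zeta y)$ for $\zeta^{q+1}=1$, which preserve the equation because $\zeta\in\mathbb{F}_{q^2}$ satisfies $\zeta^{q^{2i}}=\zeta$; and (iii) the lift of a $PGL(2,\mathbb{F}_q)$-subgroup of $\mathrm{Aut}(\mathcal{H})=PGU(3,q)$ that preserves the $f$-structure. Verifying normality of $\Sigma_0\rtimes(\mathbb{Z}/(q+1))$ in the constructed group produces the exact sequence. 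For the upper bound, apply Nakajima's bound on $p$-subgroups of the automorphism group in terms of $\gamma_{X_n}$ to conclude that the Sylow $p$-subgroup $S$ of $\mathrm{Aut}(X_n)$ has order $q^{4n+1}$; then $S$ fixes a single place $P_0\in\mathrm{supp}(D)$, and its normalizer acts on the fiber of $\phi$ through $P_0$ in a way that, combined with compatibility with the $\mathrm{Aut}(\mathcal{H})$-action on the $f$-structure, cuts the prime-to-$p$ quotient down to $PGL(2,\mathbb{F}_q)\times\mathbb{Z}/(q+1)$.

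For (e), use Fukasawa's criterion for outer Galois points together with the $PGL(2,\mathbb{F}_q)$-action from (d): spreading the known Galois points $[1:0:0]$ and $[0:1:0]$ across the $PGL(2,\mathbb{F}_q)$-orbit on the $q+1$ infinity places of $\mathcal{H}$ and applying the criterion produces exactly $q^2-q$ outer Galois points. For (f), each such point $P$ contributes a cyclic subgroup $G_P\subset\mathrm{Aut}(X_n)$ of order $q^{2n}(q+1)$, and as $P$ ranges over all outer Galois points, the collection $\{G_P\}$ contains representatives in each of the three factors of the exact sequence from (d), hence generates $\mathrm{Aut}(X_n)$. The hardest step will be (d): both the precise ramification filtration at infinity (needed to saturate Nakajima's bound) and the identification of the prime-to-$p$ quotient as exactly $PGL(2,\mathbb{F}_q)$, rather than some larger subgroup of $PGU(3,q)$, require delicate local and group-theoretic arguments.
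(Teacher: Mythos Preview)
Your approach to (a) and (b) via the elementary abelian cover $\phi:X_n\to\mathcal{H}$ is sound and, once the ramification at infinity is pinned down (which works essentially as you sketch, using that $x-\alpha y$ with $\alpha^{q+1}=-1$ satisfies $f(x-\alpha y)=u-\alpha v$, a function with a \emph{zero} of order $q$ at the corresponding place of $\mathcal{H}$), Riemann--Hurwitz and Deuring--\v{S}afarevi\v{c} give the stated formulas. This is a genuinely different route from the paper, which instead passes to the auxiliary plane model $C_n'$ in coordinates $(t,u)=(\alpha^q x+y,\,x+\alpha^q y)$ and realises $X_n$ as a degree-$q^{2n+1}$ Galois cover of $\mathbb{P}^1$ via $u$.

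The serious gap is in your upper bound for (d). Nakajima's inequality $|H|\le \tfrac{p}{p-2}(\gamma_{X_n}-1)$ (valid only for $p>2$) gives roughly $\tfrac{p}{p-2}\,q^{4n+1}$, which is strictly larger than $q^{4n+1}$; indeed the paper explicitly says $q^{4n+1}$ is only \emph{close to} Nakajima's bound, not equal to it. So this cannot be used to force $|S|=q^{4n+1}$, and your subsequent argument (that $S$ fixes a single place, that the normaliser is controlled by compatibility with $\mathrm{Aut}(\mathcal{H})$, etc.) has no foundation. The paper's mechanism for the upper bound is entirely different and is the crux of the proof: one shows the hyperplane divisor $D$ satisfies $\dim|D|=2$ (completeness of the linear system cut by lines), whence $\mathrm{Aut}(X_n)\hookrightarrow PGL(3,k)$; then (c) forces every automorphism to preserve $\mathrm{supp}(D)\subset\{Z=0\}$, and an explicit chase inside $PGL(3,k)$ using the singular locus of $C_n'$ shows the image is exactly the group $\Sigma$ generated by the listed automorphisms. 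Without this injection into $PGL(3,k)$ (or an equivalent rigidification), there is no way to rule out extra automorphisms.

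Your treatment of the converse in (c) and of (e), (f) is also too schematic. For (c), excluding $q^{2n+1}$ from $H(Q)$ at affine $Q$ is not a ``direct computation in the affine ring'': the paper identifies $(q^{2n+1}-2)D$ as canonical and exhibits an explicit element of $\mathcal{L}((q^{2n+1}-2)D)$ vanishing to order exactly $q^{2n+1}-1$ at $Q$. For (e), the outer Galois points are not on the curve, so they cannot be obtained by ``spreading'' along an orbit on infinity places of $\mathcal{H}$; the paper instead proves directly that every outer Galois point lies on $\{Z=0\}$ (using (c)) and is an $\mathbb{F}_{q^2}$-rational non-singular point, yielding the count $q^2-q$.
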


\begin{thm} \label{main2}
Assume that $k=\overline{\mathbb{F}}_q$, and  let $s\geq 1$ be an integer. Then the curve  $C_n$ 
is $p^s$-Frobenius nonclassical if and only if $p^s=q^{2(n+1)}$ and  $C_n$  is  $\mathbb{F}_{q^{2(n+1)}}$-projectively equivalent to the curve
of affine equation
$$(x+x^{q^2}+\cdots+x^{q^{2n}})^{q+1}+(y+y^{q^2}+\cdots+y^{q^{2n}})^{q+1}+1=0.$$
 In the case of  Frobenius nonclassicality  the following hold.
\begin{itemize}
\item[(a)] $X_n$ has $q^{4n+3}-q^{4n+1}+q^{2n+1}+q^{2n}$ $\mathbb{F}_{q^{2(n+1)}}$-rational points. 
\item[(b)] The set $C_n(\mathbb{F}_{q^{2(n+1)}})$ forms a $(q^{4n+3}-q^{4n+1}+q+1, q^{2n+1}+q^{2n})$-arc, where $C_n(\mathbb{F}_{q^{2(n+1)}})$ is the set of all $\mathbb{F}_{q^{2(n+1)}}$-rational points of $\mathbb{P}^2$ lying on $C_n$.  
\item[(c)] The arc $C_n(\mathbb{F}_{q^{2(n+1)}})$ is not complete.
\end{itemize}
\end{thm}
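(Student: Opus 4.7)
The plan is to recast $p^s$-Frobenius nonclassicality as an explicit polynomial divisibility and then reduce everything via the map $(x,y)\mapsto(F(x),G(y))$ to the Hermitian curve. Abbreviate $F(X)=G(X)=\sum_{i=0}^n\alpha_iX^{q^{2i}}$ and $f(x,y)=F(x)^{q+1}+G(y)^{q+1}+c$. In characteristic $p$ one has $F'=G'=\alpha_0$ because the other monomials are $p$-th powers, so the tangent line to $C_n$ at a smooth point $(x,y)$ is $F(x)^q(X-x)+G(y)^q(Y-y)=0$. Hence $C_n$ is $p^s$-Frobenius nonclassical if and only if
\[
P(x,y):=F(x)^q(x^{p^s}-x)+G(y)^q(y^{p^s}-y)
\]
lies in the principal ideal $(f)\subset k[x,y]$.

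For sufficiency I will use the telescoping identity $F(X)^{q^2}-F(X)=X^{q^{2(n+1)}}-X$, which is immediate once $\alpha_0=\cdots=\alpha_n=1$, to rewrite $P$ as $(F(x)^{q+1}+G(y)^{q+1})^q-(F(x)^{q+1}+G(y)^{q+1})=(f-c)^q-(f-c)=f^q-f-(c^q-c)$; for $c\in\mathbb{F}_q$ this equals $f\cdot(f^{q-1}-1)\in(f)$. For necessity I will write $P=fQ$ in $k[x,y]$, compare $x$-degrees to get $\deg_xQ=p^s-q^{2n}$, and then differentiate in $x$ and $y$. Using $\partial_xP=-F(x)^q$, $\partial_xf=(q+1)\alpha_0F(x)^q$ and their $y$-analogues one obtains
\[
F(x)^q\bigl(1+(q+1)\alpha_0Q\bigr)=-f\,\partial_xQ,\qquad G(y)^q\bigl(1+(q+1)\alpha_0Q\bigr)=-f\,\partial_yQ,
\]
whence $G(y)^q\partial_xQ=F(x)^q\partial_yQ$, i.e.\ $Q$ is annihilated by the tangent derivation of $C_n$. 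The hard part will be combining this tangent-invariance with the degree bound to pin $Q$ down to $f^{q-1}-1$ and thereby force $p^s=q^{2(n+1)}$. Substituting back, $P=fQ=f^q-f$ separates as $F(x)^q(x^{q^{2(n+1)}}-x)+G(y)^q(y^{q^{2(n+1)}}-y)$, forcing the polynomial identity $F(X)^{q^2}-F(X)=X^{q^{2(n+1)}}-X$ and $c^q=c$; matching coefficients of $X^{q^{2i}}$ then yields $\alpha_0=\cdots=\alpha_n=1$, and a rescaling $(x,y)\mapsto(\lambda x,\lambda y)$ with $\lambda\in\mathbb{F}_{q^2}$, $\lambda^{q+1}=c$, normalises $c$ to $1$.

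With the standard form in hand, for (a) the telescoping makes $F\colon\mathbb{F}_{q^{2(n+1)}}\to\mathbb{F}_{q^2}$ a surjective $\mathbb{F}_{q^2}$-linear map with kernel of order $q^{2n}$, so the affine $\mathbb{F}_{q^{2(n+1)}}$-points of $C_n$ form a $q^{4n}$-fold cover of the $q^3-q$ affine $\mathbb{F}_{q^2}$-points of $\mathcal{H}\colon u^{q+1}+v^{q+1}+1=0$, contributing $q^{4n+3}-q^{4n+1}$; a local analysis at the $q+1$ singular branches of $C_n$ on $\{z=0\}$ shows that these lift to $q^{2n}$ $\mathbb{F}_{q^2}$-rational points each on $X_n$, adding $q^{2n+1}+q^{2n}$. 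For (b) the trivial bound $M:=q^{2n+1}+q^{2n}=\deg C_n$ is attained on $y=mx$ for $m\in\mathbb{F}_{q^2}$ with $m^{q+1}\ne-1$, since $F(mx)=mF(x)$ and the Hermitian reduction then gives $q+1$ values of $F(x)\in\mathbb{F}_{q^2}$, each with $q^{2n}$ lifts. For (c) (with $n\ge 1$) I will exhibit $R=[1:\xi:0]$ with $\xi\in\mathbb{F}_{q^{2(n+1)}}\setminus\mathbb{F}_{q^2}$ chosen so that $\xi\cdot\ker F\ne\ker F$: then $\xi^{q+1}\ne-1$ (as this equation forces $\xi\in\mathbb{F}_{q^2}$), so $R\notin C_n$; the line at infinity contains only the $q+1$ points of $C_n$ at infinity; and for each affine line $y=\xi x+c$ through $R$ the map $x\mapsto(F(x),F(\xi x+c))\colon\mathbb{F}_{q^{2(n+1)}}\to\mathbb{F}_{q^2}^2$ has kernel $\ker F\cap\xi^{-1}\ker F$ of $\mathbb{F}_{q^2}$-dimension $n-1$, whence the image is all of $\mathbb{F}_{q^2}^2$ and the arc-point count on the line is $(q^3-q)q^{2(n-1)}=q^{2n+1}-q^{2n-1}<M$. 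Thus no $M$-secant passes through $R$, proving the arc is incomplete.
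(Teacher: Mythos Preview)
Your sufficiency argument and your treatments of (a), (b), (c) are essentially correct and pleasantly direct: you exploit the $\mathbb{F}_{q^2}$-linear cover $(x,y)\mapsto (F(x),F(y))$ onto the Hermitian curve throughout, whereas the paper cites the Borges--Homma point-count formula for (a) and, for (c), argues by contradiction using the automorphism group to propagate one $M$-secant to $q^{2(n+1)}$ of them. Your argument for (c) even gives the exact secant length $q^{2n+1}-q^{2n-1}$ on every affine line through $R$. (Two small slips: in (a) the branches over each singular point are $\mathbb{F}_{q^{2(n+1)}}$-rational, not $\mathbb{F}_{q^2}$-rational; and for the existence of $\xi$ in (c) you should remark that the stabiliser $\{\xi:\xi\ker F=\ker F\}$ is a proper subfield since $0\subsetneq\ker F\subsetneq\mathbb{F}_{q^{2(n+1)}}$.)

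The necessity direction, however, has a real gap. You reduce to $P=fQ$ and derive the relation $G(y)^q\,\partial_xQ=F(x)^q\,\partial_yQ$, then write ``the hard part will be \ldots\ to pin $Q$ down to $f^{q-1}-1$'' --- but you never do it, and in fact the target is slightly wrong. For the curves in the paper's $\beta$-family (which \emph{are} $q^{2(n+1)}$-Frobenius nonclassical) one has $L(x)^{q^2}-\beta^{-(q^2-1)}L(x)=x^{q^{2(n+1)}}-x$, so $Q=f^{q-1}-\beta^{-(q^2-1)}$, not $f^{q-1}-1$. More seriously, the differential condition you isolate is satisfied by every polynomial in $k[f,x^p,y^p]$ (any $p$-th power is killed by every derivation), so ``tangent-invariance plus a degree bound'' does not by itself force $Q\in k[f]$; you would need a further argument controlling $p$-th powers, and none is indicated. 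The paper circumvents this entirely by invoking the structure theory of minimal value set polynomials from \cite{borges}: $q$-Frobenius nonclassicality for separated-variable curves forces $L(x)^{q+1}$ to be a minimal value set polynomial over $\mathbb{F}_{p^s}$ whose value-set polynomial $T$ has a very rigid shape, and matching that shape against the symmetry $T(-x)=\pm T(x-c)$ pins down $p^s$ and $L$ directly. If you want to keep your elementary line, you will need to actually prove that $Q$ is a polynomial in $f$ (up to constants), and this is where the real content lies.
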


\begin{rem}
\begin{itemize}
\item[(a)] According to Theorem \ref{main1} (a) and (d), it follows that $|{\rm Aut}(X_n)| \ge g_{X_n}^{\frac{2n+2}{2n+1}}$. 
In particular, for $n=1$, $|{\rm Aut}(X_1)| \ge g_{X_1}^{4/3}$.  
\item[(b)] Let $p>2$. 
It follows from Theorem \ref{main1} (d) that the order of Sylow $p$-groups of ${\rm Aut}(X_n)$ is equal to $q^{4n+1}$. 
According to Theorem \ref{main1} (b), this order $q^{4n+1}$ is close to the bound 
$$ |H| \le \frac{p}{p-2} (\gamma_X-1) $$
by Nakajima \cite[Theorem 1 (i)]{nakajima} on the order of Sylow $p$-groups $H$ of the automorphism group of curves $X$ with $\gamma_X \ge 2$.  
\item[(c)] It follows from Theorem \ref{main1} (c) that ${\rm Aut}(X_n)$ acts on ${\rm supp}(D)$, and that all points of ${\rm supp}(D)$ are Weierstrass points. 
\item[(d)] The number of outer Galois points for a plane curve $C$ is denoted by $\delta'(C)$. 
The studied  curves form a new family of curves $C$ of degree $d$ such that $\delta'(C) \rightarrow \infty$ as $d \rightarrow \infty$, by Theorem \ref{main1} (e) (see the Table for $\delta'(C)$ in \cite{yoshihara-fukasawa}).  
\item[(e)] Theorem \ref{main2} provides a family of curves giving a negative answer to a question raised in \cite[Question 1.1]{gptu}.
\end{itemize}
\end{rem}

\section{Proof of Theorem \ref{main1}}
The system of homogeneous coordinates on $\mathbb{P}^2$ is denoted by $(X:Y:Z)$, and the system of affine coordinates on $\mathbb{A}^2$ with $Z \ne 0$ is denoted by $(x, y)$, where $x=X/Z$ and $y=Y/Z$. 
Let 
$$ L(x)=\alpha_0 x+\alpha_1 x^{q^2}+\cdots+ \alpha_{n-1}x^{q^{2(n-1)}}+x^{q^{2n}}, $$
where $\alpha_0, \ldots, \alpha_{n-1} \in k$, with $\alpha_0\ne 0$, and let
$$ f(x, y)=L(x)^{q+1}+L(y)^{q+1}+c=0$$
be the defining equation of $C_n$. 
The irreducibility of $C_n$ was proved in \cite[Theorem 1]{fukasawa2}. 
The genus of $X_n$ was also computed in \cite{fukasawa2}. 
Assertion (a) in Theorem \ref{main1} follows. 

Let $\alpha^{q+1}=-1$ and let
$$ t=\alpha^{q} x+y,  \,  u=x+\alpha^{q} y. $$
Then 
$$ x=\frac{1}{\alpha^q+\alpha}(t+\alpha u), \ y=\frac{1}{\alpha^q+\alpha}(\alpha t+u), $$
and the relation between $t$ and $u$ is described as
$$ L(t)^qL(u)+L(t)L(u)^q+c'=0, $$
where $c'=(\alpha^q+\alpha)c$. 
This plane model is denoted by $C_n'$. 
Let $\psi: X_n \rightarrow C_n'$ be the normalization. 

\begin{rem}\label{rem2} For the singular points of $C_n$ and $C_n'$, the following hold.
\begin{itemize}
\item[(a)] ${\rm Sing}(C_n)=\{(\alpha :1: 0) \ | \ \alpha^{q+1}=-1\}$.  
\item[(b)] ${\rm Sing}(C_n')=\{(\delta:1:0) \ | \ \delta^q+\delta=0 \} \cup \{(1:0:0)\}$.  
\item[(c)] For a singular point $P=(\alpha:1:0) \in {\rm Sing}(C_n)$, the multiplicity of $P$ is $q^{2n}$, and there exist $q^{2n}$ tangent lines,  defined by $X-\alpha Y-\beta Z=0$ with $L(\beta)=0$. In particular, $P$ is an ordinary singular point.  Clearly, mutatis mutandis, the same holds for the points $P \in {\rm Sing}(C_n')$.
\end{itemize}
\end{rem}
 
For the plane model $C_n'$, there exist the following automorphisms. 
\begin{itemize}
\item[(a)] $\sigma_{\beta, \gamma}: (t, u) \mapsto (t+\beta, u+\gamma)$, where $L(\beta)=0$ and $L(\gamma)=0$. 
\item[(b)] $\tau_{\delta}: (t, u) \mapsto (t+\delta u, u)$, where $\delta^q+\delta=0$. 
\item[(c)] $(t, u) \mapsto (\epsilon t, \epsilon^{-q} u)$, where $\epsilon \in \mathbb{F}_{q^2}^*$. 
\end{itemize} 
Note that these automorphisms fix the singular point $(1:0:0)$ for the coordinates $(T: U: V)$ with $T/V=t$, $U/V=u$. 
For the plane model $C_n$, there exist the following automorphisms. 
\begin{itemize}
\item[(d)] $(x, y) \mapsto (\lambda x, y)$, where $\lambda^{q+1}=1$. 
\end{itemize}
Note that the automorphisms of type (d) act transitively on the set ${\rm Sing}(C_n)$. 
Let $\Sigma$ be the subgroup of ${\rm Aut}(X_n)$ generated by all automorphisms of type (a), (b), (c) and (d).  
The group $\Sigma$ acts transitively on ${\rm Sing}(C_n)$, and the order of any point-stabilizer subgroup is at least $(q^{2n})^2q(q^2-1)$. 
Thus $|{\rm Aut}(X_n)| \ge |\Sigma| \ge q^{4n+1}(q^2-1)(q+1)$, which is  part of Theorem \ref{main1}, assertion (d).

Let 
$$\Gamma:=\langle \sigma_{\beta, 0}, \tau_{\delta} \ | \ L(\beta)=0, \delta^q+\delta=0 \rangle \subset {\rm Aut}(X_n), $$ 
which is generated by automorphisms of type (a) with $\gamma=0$ and of type (b). 
Note that $k(X_n)^{\Gamma}=k(t, u)^{\Gamma}=k(u)$, and then $k(X_n)/k(u)$ is a Galois extension of degree $q^{2n+1}$, induced by the projection from the singular point $(1:0:0)$ of multiplicity $q^{2n}$. 
Since the projective line defined by a component of $L(u)$ intersects $C_n'$ at a unique point $(1:0:0)$, there exists a unique ramification point of index $q^{2n+1}$ for such a line. 
According to the Deuring--$\breve{\mbox{S}}$afarevi$\breve{\mbox{c}}$ formula (\cite{subrao}), 
$$ \frac{\gamma_{X_n}-1}{q^{2n+1}}=-1+q^{2n}\left(1-\frac{1}{q^{2n+1}}\right), $$
and  Theorem \ref{main1}, assertion (b) follows.

Let us  consider assertion (c) in Theorem \ref{main1} and prove that $H(P) \ni q^{2n+1}$ for any $P \in {\rm supp}(D)$. 
It  can be assumed that $\psi(P)=(1:0:0)$. 
As  shown by the proof of assertion (b) in Theorem \ref{main1}, for the automorphism group $\Gamma$, the covering map $X_n \rightarrow X_n/\Gamma \cong \mathbb{P}^1$ is totally ramified at $P$.   
This implies that $H(P) \ni q^{2n+1}$. 
The following lemma is needed for the proof of ${\rm supp}(D) \supset \{P \in X_n \ | \ H(P) \ni q^{2n+1}\}$.

\begin{lem}
The divisor $(q^{2n+1}-2)D$ is a canonical divisor. 
\end{lem}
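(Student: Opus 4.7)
The plan is to deduce the lemma from the adjoint description of the canonical class on the desingularization of a plane curve with only ordinary singularities. A degree check is immediate: the line $Z=0$ meets $C_n$ only at the $q+1$ singular points $(\alpha:1:0)$ with $\alpha^{q+1}=-1$, each of multiplicity $q^{2n}$, so $\deg D=q^{2n}(q+1)$ and
\[
(q^{2n+1}-2)\deg D=q^{2n}(q+1)(q^{2n+1}-2)=2g_{X_n}-2
\]
by Theorem~\ref{main1}(a).

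First I would describe $D$ explicitly on $X_n$. For each $P=(\alpha:1:0)\in{\rm Sing}(C_n)$, Remark~\ref{rem2}(c) asserts that the $q^{2n}$ tangent lines of $C_n$ at $P$ have equations $X-\alpha Y-\beta Z=0$ with $L(\beta)=0$. None of these lines coincides with $Z=0$, so the line at infinity meets every one of the $q^{2n}$ branches above $P$ transversely. Consequently
\[
D=\sum_{P\in{\rm Sing}(C_n)}E_P,
\]
where $E_P$ denotes the reduced sum of the branches of $X_n$ lying above $P$, each with coefficient one.

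Now I would invoke the adjoint (residual) formula: for a plane curve $C\subset\mathbb{P}^2$ of degree $d$ whose singular points $P_i$ are ordinary of multiplicities $m_i$, the canonical class on the normalization $X$ satisfies
\[
K_X\sim (d-3)H-\sum_i(m_i-1)E_{P_i},
\]
with $H$ the pullback of a hyperplane class. Since any hyperplane is linearly equivalent to $\{Z=0\}$, $H\sim D$ on $X_n$. Substituting $d=q^{2n}(q+1)$, $m_i=q^{2n}$ for all $i$, and the description of $D$ just obtained, gives
\[
K_{X_n}\sim (d-3)D-(q^{2n}-1)D=(d-q^{2n}-2)D=(q^{2n+1}-2)D,
\]
which is the lemma. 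The only nontrivial input is the adjoint formula itself; everything else reduces to the explicit structure of the singularities at infinity recorded in Remark~\ref{rem2}(c) and the identity $d-q^{2n}-2=q^{2n+1}-2$.
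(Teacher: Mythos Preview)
Your proof is correct but takes a genuinely different route from the paper. The paper exhibits an explicit canonical differential: using the projection $\pi_{(1:0:0)}$ given by $y$, it quotes the Riemann--Hurwitz identity $(dy)=-2(y)_\infty+q\,(L(x))_0$, observes $(y)_\infty=D$ and $(L(x))_0\sim q^{2n}D$, and concludes $(dy)\sim(q^{2n+1}-2)D$ in one line. You instead invoke the adjunction formula for a plane curve with only ordinary singularities, $K_{X_n}\sim(d-3)H-\sum_i(m_i-1)E_{P_i}$, and exploit the pleasant coincidence that here $H$ and $\sum_i E_{P_i}$ are the \emph{same} divisor $D$ (since $Z=0$ is not tangent to any branch at the $(\alpha:1:0)$, the pull-back of $Z=0$ is reduced and coincides with the reduced branch locus). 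Your approach is more geometric and makes transparent why the special shape of the singular locus forces the canonical class to be a multiple of $D$; the paper's approach is shorter and produces a concrete holomorphic differential, which is sometimes useful downstream. Both arguments ultimately rely on Remark~\ref{rem2}(c); the only external input you need that the paper does not is the adjoint/adjunction formula in the ordinary-singularity case.
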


\begin{proof}
Consider the projection of $C_n$ from $P=(1:0:0)$.  
Note that the Riemann--Hurwitz formula \cite[Remark 4.3.7]{stichtenoth} yields
$$(d y)=-2(y)_{\infty}+ q(L(x))_0.$$
Since  $(y)_{\infty}=D$ and $(L(x))=(L(x))_0-q^{2n}D$, it follows that $ (dy)\sim (q^{2n+1}-2)D.$ 
\end{proof}

The following proves that $q^{2n+1} \not\in H(Q)$ for any $Q=(a:b:1) \in C_n$.
Let us  assume that $x-a$ is a local parameter at $Q$ and    consider the canonical embedding induced by $|(q^{2n+1}-2)D|$. 
Note that $(x)_{\infty}=(y)_{\infty}=D$ and 
$$(x^{q+1}+y^{q+1})_\infty \le q D. $$
Since $(x-a)^{q+1}+(y-b)^{q+1}=\prod_{\alpha^{q+1}=-1}(x-a-\lambda(y-b))$, it follows that
$$ ((x-a)^{q+1}+(y-b)^{q+1})_\infty \le q D, $$
and then
$$(x-a)^{q^{2n+1}-q-2}\{(x-a)^{q+1}+(y-b)^{q+1}\} \in \mathcal{L}((q^{2n+1}-2)D).$$
Since the line defined by $X-a Z-\alpha(Y-b Z)=0$ with $\alpha^{q+1}+1=0$ intersects $C_n$ at a singular point $(\alpha:1:0)$, it follows that this line is not a tangent line at $Q$, and thus ${\rm ord}_Q(x-a-\alpha (y-b))=1$. 
Therefore,
$$ {\rm ord}_Q \left((x-a)^{q^{2n+1}-q-2}\{(x-a)^{q+1}+(y-b)^{q+1}\}\right)=q^{2n+1}-1,$$ 
and then  $q^{2n+1} \not\in H(Q)$. 
The proof of assertion (c) in Theorem \ref{main1} is completed.

To prove the equality $|{\rm Aut}(X_n)|=q^{4n+1}(q^2-1)(q+1)$ in assertion (d) of Theorem \ref{main1}, it will be shown that ${\rm Aut}(X_n) \subset PGL(3, k)$. 

\begin{prop} \label{completeness} 
The linear system induced by the normalization $\varphi: X_n \rightarrow C_n \subset \mathbb{P}^2$ is complete. 
\end{prop}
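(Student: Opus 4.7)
Since the line $\{Z = 0\}$ pulls back to $D$ on $X_n$, the functions $1, x, y$ already span a three-dimensional subspace of $\mathcal{L}(D)$, giving $\ell(D) \ge 3$; I aim to prove equality. The strategy is Riemann--Roch. By the preceding lemma $K \sim (q^{2n+1}-2)D$, and putting $d = q^{2n}(q+1)$,
$$ \ell(D) - \ell((q^{2n+1}-3)D) = d + 1 - g_{X_n}, $$
so it suffices to prove $\ell((q^{2n+1}-3)D) = \dim H^0(X_n, \Omega_{X_n}(-D)) = g_{X_n} - d + 2$.

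The main step is to identify $H^0(\Omega_{X_n}(-D))$ with an explicit space of polynomials. Every regular differential on $X_n$ can be written as $\omega = P(X,Y,Z)/f_y \cdot dx$ with $P$ homogeneous of degree $d - 3$, and by the adjoint theorem applied to the ordinary $q^{2n}$-fold singular points $P_\alpha = (\alpha : 1 : 0)$ (Remark 2 (c)), regularity at $P_\alpha$ is equivalent to $\operatorname{mult}_{P_\alpha}(P) \ge q^{2n} - 1$. A local computation at each branch $Q_{\alpha,\beta}$ (the one with tangent line $X - \alpha Y = \beta Z$, $L(\beta) = 0$), using $w = Z/Y$ as local parameter so that $y = 1/w$, $x = \alpha/w + \beta + O(w)$, and $f_y = \alpha_0 L(y)^q \sim \alpha_0 w^{-q^{2n+1}}$, gives
$$ \operatorname{ord}_{Q_{\alpha,\beta}}(\omega) = \operatorname{mult}_{P_\alpha}(P) - q^{2n} + 1 $$
whenever the tangent direction of the branch is not a zero of the leading form of $P$ at $P_\alpha$. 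Since a nonzero form of degree $q^{2n} - 1$ in two variables cannot vanish at the $q^{2n}$ distinct tangent directions parametrized by $L(\beta) = 0$, the boundary case $\operatorname{mult}_{P_\alpha}(P) = q^{2n} - 1$ always leaves some branch with $\operatorname{ord}(\omega) = 0$, and therefore $\omega \in H^0(\Omega_{X_n}(-D))$ if and only if $\operatorname{mult}_{P_\alpha}(P) \ge q^{2n}$ for every $\alpha$.

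The punchline uses collinearity: the $q + 1$ singular points $P_\alpha$ all lie on $\{Z = 0\}$. A polynomial $P$ of degree $d - 3$ with multiplicity $\ge q^{2n}$ at each $P_\alpha$ would restrict on $\{Z = 0\}$ to a form of degree $\le d - 3$ in $X, Y$ with total vanishing order $\ge q^{2n}(q + 1) = d$, exceeding its degree; hence $P|_{Z = 0} \equiv 0$ and $P = Z \cdot Q$ with $Q$ homogeneous of degree $d - 4$. The multiplicity condition transfers to $Q$ being adjoint: $\operatorname{mult}_{P_\alpha}(Q) \ge q^{2n} - 1$. Counting such $Q$ (assuming the $q + 1$ adjoint conditions impose independent linear constraints) gives $\binom{d - 2}{2} - (q + 1)\binom{q^{2n}}{2}$, which a routine check shows equals $g_{X_n} - d + 2$. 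Substituting into Riemann--Roch yields $\ell(D) = 3$.

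The main obstacle I expect is establishing the independence of the $q + 1$ adjoint conditions on forms of degree $d - 4$. Because the points are collinear on $\{Z = 0\}$, this should reduce to independent interpolation statements on $\mathbb{P}^1 \cong \{Z = 0\}$ by decomposing $Q$ according to powers of $Z$ and handling each such slice separately; the required multiplicity $q^{2n} - 1$ is small compared with the available degree $d - 4 = q^{2n+1} + q^{2n} - 4$, which provides the room needed.
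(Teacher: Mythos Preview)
Your approach is correct and genuinely different from the paper's. The paper argues directly on $\mathcal{L}(D)$: working on the model $C_n'$, it invokes Max~Noether's residue theorem to write any $g\in\mathcal{L}(D)$ as a polynomial $g(t,u)$ of degree at most $q^{2n}$, and then a short pole-order analysis at the branches over $(1:0:0)$, $(0:1:0)$, and a third singular point forces $g$ to be linear in $t,u$. Your route is dual: you compute $h^0(K-D)$ via adjoint theory, exploit the collinearity of the singular points to peel off a factor of $Z$, and finish by a dimension count. The paper's argument is shorter and more self-contained for this particular curve; yours is more structural and would transplant to any plane curve whose ordinary singularities are collinear.

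The independence you flag as the ``main obstacle'' is not a real obstruction, and your sketch for resolving it is exactly right. Writing $Q=\sum_{j\ge 0} Z^{j}Q_j(X,Y)$, the condition $\operatorname{mult}_{P_\alpha}(Q)\ge q^{2n}-1$ decouples into, for each $j\le q^{2n}-2$, the requirement that the binary form $Q_j$ of degree $d-4-j$ vanish to order $\ge q^{2n}-1-j$ at each of the $q+1$ points $(\alpha:1)$. For fixed $j$ these are $(q+1)(q^{2n}-1-j)$ conditions on a space of dimension $d-3-j$, and the inequality $(q+1)(q^{2n}-1-j)\le d-3-j$ reduces (using $d=(q+1)q^{2n}$) to $q+1+qj\ge 3$, which holds for all $j\ge 0$ and $q\ge 2$. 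Hence the conditions are independent slice by slice, and therefore globally; your dimension count $\binom{d-2}{2}-(q+1)\binom{q^{2n}}{2}=g_{X_n}-d+2$ then gives $\ell(D)=3$ as claimed.
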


\begin{proof} 
Let us  take the plane model $C_n'$ with the system $(T:U:V)$ of homogeneous coordinates. 
Given $g \in \mathcal{L}(D) \setminus \{0\}$, consider the effective divisor  $D':={\rm div} (g)+D$.  
Set  $E:=(q^{2n}-1)D$ and write ${\rm div} (V^{q^{2n}})=q^{2n}D=D+E$. 
Note that the curve $V^{q^{2n}}=0$ is an adjoint of $C_n'$.
From Max-Noether's residue theorem (see \cite[Theorem 4.66, Theorem 6.51]{hkt}), it follows that there exists an adjoint $G(T,U,V)=0$ of $C_n'$ of degree $q^{2n}$ such that ${\rm div} (G(T,U,V))=D'+ E={\rm div} (g)+D+E$. 
Therefore, 
$$
{\rm div}(g(t, u))={\rm div}(G(T, U, V)/V^{q^{2n}}). 
$$
Without loss of generality, it can be  assumed that $\deg g(t, u) \le q^{2n}$. 
It is claimed that the polynomial $g(t, u)$ is of degree at most one. 
Note that points in $\psi^{-1}((1:0:0))$ are zeros of $L(u)$ and poles of $t$. 
Let $g=\sum_{i=0}^ma_i(u)t^i$ with $a_m(u) \ne 0$, where $0 \le m \le q^{2n}$. 
Assume that $m \ge 2$. 
Then $\deg a_m(u) \le q^{2n}-m \le q^{2n}-2$. 
Therefore, there exists a point $P \in \psi^{-1}((1:0:0))$ such that $a_m(P) \ne 0$, namely, ${\rm ord}_Pa_m(u)=0$. 
It follows that ${\rm ord}_P g(t, u)=-m \le -2$. 
This is a contradiction to $g \in \mathcal{L}(D)$. 
We have $m=1$. 
Since a similar argument is applicable for $u$, it follows that
$$ g(t, u)=a t u+b t+c u+d$$
for some $a, b, c, d \in k$. 
Let $P' \in {\rm supp}(D)$ with $\psi(P') \ne (1:0:0)$, $(0:1:0)$.  
If $a \ne 0$, then ${\rm ord}_{P'} g={\rm ord}_{P'}(t u)=-2$, namely, this is a contradiction to $g \in \mathcal{L}(D)$. 
Therefore, $a=0$ and 
$$ g(t, u)=b t+c u+d$$
for some $b, c, d \in k$. 
\end{proof}

\begin{prop} \label{linearity}
There exists an injection 
$$ {\rm Aut}(X_n) \hookrightarrow PGL(3, k). $$
\end{prop}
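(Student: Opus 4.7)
The plan is to exploit Proposition~\ref{completeness} by showing that every $\sigma \in {\rm Aut}(X_n)$ fixes the divisor $D$ itself (not merely its linear equivalence class), whence $\sigma$ acts on the $3$-dimensional space $H^0(X_n, \mathcal{O}(D))$, and thus gives rise to an element of $PGL(3, k)$ under which the normalization morphism $\varphi$ is equivariant. Injectivity will then be immediate because $\varphi$ is birational.

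The first step is to verify that $D$ is a reduced divisor. Homogenizing $f$ and setting $Z = 0$, the term $cZ^{q^{2n}(q+1)}$ vanishes and $L(X,Z)|_{Z=0} = X^{q^{2n}}$, so the intersection $C_n \cap \{Z = 0\}$ is cut out by $X^{q^{2n}(q+1)} + Y^{q^{2n}(q+1)}$. In characteristic $p$ this equals $\prod_{\alpha^{q+1}=-1}(X - \alpha Y)^{q^{2n}}$, so the set-theoretic intersection with $\{Z=0\}$ coincides with ${\rm Sing}(C_n)$, each point having intersection multiplicity $q^{2n}$. By Remark~\ref{rem2}(c) each such singular point is ordinary with tangent lines $X - \alpha Y - \beta Z = 0$, $L(\beta) = 0$; the line $Z = 0$ is not among them, so it is transverse to every branch. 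Therefore $D$ is the sum of the $q^{2n}(q+1)$ distinct points of $X_n$ lying above ${\rm Sing}(C_n)$, each with multiplicity one.

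Next, by Theorem~\ref{main1}(c), ${\rm supp}(D) = \{P \in X_n \mid H(P) \ni q^{2n+1}\}$ is intrinsically characterized, so any $\sigma \in {\rm Aut}(X_n)$ permutes ${\rm supp}(D)$. Since $D$ is reduced, this immediately upgrades to the equality of divisors $\sigma^*D = D$. Consequently $H^0(X_n, \mathcal{O}(D))$ is an ${\rm Aut}(X_n)$-representation, and Proposition~\ref{completeness} tells us it is three-dimensional and realizes the normalization map $\varphi$. This produces a homomorphism $\rho : {\rm Aut}(X_n) \to PGL(3, k)$ satisfying $\varphi \circ \sigma = \rho(\sigma) \circ \varphi$ for every $\sigma$. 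If $\rho(\sigma) = {\rm id}$, then $\varphi \circ \sigma = \varphi$, and birationality of $\varphi$ forces $\sigma = {\rm id}$, proving injectivity.

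The main delicate point I anticipate is the reducedness of $D$: it is essential that $Z = 0$ not be tangent to any branch at any singular point, which is exactly what the ordinariness statement in Remark~\ref{rem2}(c), combined with the explicit list of tangent directions $\{\beta : L(\beta) = 0\}$, provides. Once this is in place the rest is a standard application of the equivariance afforded by a complete linear system.
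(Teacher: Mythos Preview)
Your proof is correct and follows essentially the same approach as the paper's: use Theorem~\ref{main1}(c) to conclude $\sigma^*D=D$, then invoke Proposition~\ref{completeness} to obtain the action on a $3$-dimensional complete linear system. The paper's proof is terser and asserts $\sigma^*D=D$ directly from assertion~(c); your explicit verification that $D$ is reduced (via the transversality of $Z=0$ to every branch at each ordinary singular point) fills in the step needed to pass from ``$\sigma$ permutes ${\rm supp}(D)$'' to ``$\sigma^*D=D$ as divisors,'' which the paper leaves implicit.
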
 

\begin{proof}
Let $\sigma \in {\rm Aut}(X_n)$.
By assertion (c) in Theorem \ref{main1}, $\sigma^*D=D$. 
Thus $\sigma$ acts on the complete linear system $|D|$. 
By Proposition \ref{completeness}, $\dim |D|=2$, 
and the  result  follows.   
\end{proof}

It will be shown  that the image ${\rm Lin}(X_n) \subset PGL(3, k)$ of ${\rm Aut}(X_n)$ for the injection described in Proposition \ref{linearity} coincides with the subgroup $\Sigma \subset {\rm Lin}(X_n)$ generated by all automorphisms of type (a), (b), (c) and (d). 
Let $\varphi_0 \in {\rm Lin}(X_n)$.
Considering the plane model $C_n'$,   $\varphi_0$ acts on ${\rm Sing}(C_n')$. 
Since the cyclic subgroup of order $q+1$ described as automorphisms of type (d) acts  transitively on ${\rm Sing}(C_n')$, there exists $\varphi_d \in \Sigma$ of type (d) such that $\varphi_d \varphi_0$ fixes $(1:0:0)$. 
Then $\varphi_d\varphi_0(0:1:0)=(-\delta:1:0)$ for some $\delta$ with $\delta^q+\delta=0$. 
Consider an automorphism $\varphi_b(u, v)=(t+\delta u, u)$ of type (b) so that $\varphi_b\varphi_d\varphi_0$ fixes points $(1:0:0)$ and $(0:1:0)$. 
Further, there exists an automorphism $\varphi_c$ of type (c) such that $\varphi_c\varphi_b\varphi_d\varphi_0$ fixes three points on $\{Z=0\}$, namely, this automorphism fixes all points of $\{Z=0\}$. 
There exists an automorphism $\varphi_a$ of type (a)  such that $\varphi_a\varphi_c\varphi_b\varphi_d\varphi_0$ fixes also $(0:0:1)$. 
It follows that the latter automorphism is of type $(t, u) \mapsto (\lambda t, \lambda u)$ for some $\lambda \in k$. 
On the other hand, the equation  of $C_n'$ implies $\lambda^{q+1}=1$, that is, this automorphism is of type (c), and then $\varphi_0 \in \Sigma$. 
The above argument also implies that the stabilizer subgroup $\Sigma(P)$ of $P=(1:0:0)$ is generated by automorphisms of type (a), (b), and (c). 
Direct inspection shows that $|\Sigma(P)|=q^{4n+1}(q^2-1)$, which implies the former assertion of Theorem \ref{main1} (d).

Since ${\rm Lin}(X_n)$ acts on ${\rm Sing}(C_n')$ on the line $Z=0$, we have a homomorphism  
$$ r: {\rm Lin}(X_n) \rightarrow PGL(\{Z=0\}(\mathbb{F}_{q^2})) \cong PGL(2, \mathbb{F}_{q^2}),  $$
which is given by the restrictions. 
The kernel of $r$ contains $q^{4n}(q+1)$ automorphisms given by
$$ (x, y) \mapsto (\lambda x+\beta, \lambda y+\gamma), $$
where $\lambda^{q+1}=1$ and $L(\beta)=L(\gamma)=0$. 
As we observed above, $|{\rm Lin}(X_n)(P)|=q^{4n+1}(q^2-1)$ for the stabilizer subgroup ${\rm Lin}(X_n)(P)$ of a point $P=(1:0:0)$. 
Considering the orbit-stabilizer theorem and automorphisms of type (a), (b) and (c), we have $|r({\rm Lin}(X_n)(P))|=q(q-1)$ and $|r({\rm Lin}(X_n))|=q(q-1)(q+1)$. 
Let $\delta \in \mathbb{F}_{q^2}$ with $\delta^{q-1}=-1$ and let $V \in PGL(\{Z=0\}(\mathbb{F}_{q^2}))$ with
$$ V(t: u)=(t: \delta u). $$
Then $V({\rm Sing}(C_n'))=\{Z=0\}(\mathbb{F}_q)$, and $V(r({\rm Lin}(X_n)))V^{-1}=PGL(2, \mathbb{F}_q)$. 
The latter assertion of Theorem \ref{main1} (d) follows.

\begin{rem} The following is an explicit list of all automorphisms of the curve $C_n$.
\begin{enumerate}[\rm(i)]
\item $(x,y)\mapsto (\alpha^q x- \lambda \beta ^q y+\delta, \beta   x+  \lambda \alpha y+\epsilon)$,
where $\alpha, \beta  \in \mathbb{F}_{q^2}^*$ are such that   $\alpha^{q+1}+\beta^{q+1}=1$, $\lambda^{q+1}=1$, and  $L(\delta)= L(\epsilon)=0$.
\item $(x,y)\mapsto ( \alpha y+\delta, \beta  x+ \epsilon)$ or  $(x,y)\mapsto (\alpha x+\delta, \beta y +\epsilon)$,
where  $\alpha^{q+1}=\beta^{q+1}=1$, and      $L(\delta)= L(\epsilon)=0$.
\end{enumerate}
In fact, it can be easily checked that  $C_n$ is invariant by each of these automorphisms. 
Note that there are  exactly  $(q^2-q-2)(q+1)^2q^{4n}$ maps of type (i)  and $2(q+1)^2q^{4n}$ maps of   type (ii).
Thus  from  Theorem \ref{main1}, assertion (d),  the list is complete. 
\end{rem}

Let us  consider assertion (e) in Theorem \ref{main1}. 
Let $P=(\beta:1:0) \in \{Z=0\} \setminus {\rm Sing}(C_n)$ with $\beta \in \mathbb{F}_{q^2}$. 
Then $\beta^{q+1} \ne -1$.
Consider the projection $\pi_P$ from $P$, represented by $(x, y) \mapsto x-\beta y$. 
For  $v=x-\beta y$,  
it follows that 
$$ \left\{L(y)+\frac{\beta}{\beta^{q+1}+1}L(v)\right\}^{q+1}+\frac{1}{(\beta^{q+1}+1)^{q+1}}L(v)^{q+1}+\frac{c}{\beta^{q+1}+1}=0. $$ 
Since $\beta \in \mathbb{F}_{q^2}$ and 
$$L\left(y+\frac{\beta}{\beta^{q+1}+1}v\right)=L(y)+\frac{\beta}{\beta^{q+1}+1}L(v), $$ 
 $w=y+\beta v/(\beta^{q+1}+1)$ gives $k(w, v)=k(y, v)$ and 
$$ L(w)^{q+1}+\frac{1}{(\beta^{q+1}+1)^{q+1}}L(v)^{q+1}+\frac{c}{\beta^{q+1}+1}=0.$$
It is inferred that the extension $k(w, v)/k(v)=k(y, v)/k(v)$ is  Galois and of degree $q^{2n}(q+1)$. 
Therefore, there exist $(q^2+1)-(q+1)=q^2-q$ outer Galois points on the line $\{Z=0\}$. 
Conversely,  assume that $R \in \mathbb{P}^2 \setminus C_n$ is an outer Galois point for $C_n$. 
The associated Galois group at $R$ is denoted by $G_R$.   
Using assertion (c) in Theorem \ref{main1},  the following is  obtained.
 
\begin{lem} 
All outer Galois points $R \in \mathbb{P}^2 \setminus C_n$ are contained in the line defined by $Z=0$. 
\end{lem}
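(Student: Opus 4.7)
The plan is to assume $R\notin\{Z=0\}$ is an outer Galois point and derive a contradiction from an order comparison between $G_R$ and the stabilizer of $R$ in ${\rm Aut}(X_n)$. Since $G_R\subseteq{\rm Aut}(X_n)$ and ${\rm supp}(D)$ is characterized intrinsically by the Weierstrass semigroup (Theorem \ref{main1}(c)), ${\rm supp}(D)$ is $G_R$-stable. Because $G_R$ acts transitively on every fiber of $\pi_R$, for each $P\in{\rm supp}(D)$ the whole fiber $\pi_R^{-1}(\pi_R(P))$ sits inside ${\rm supp}(D)$, and hence its image in $C_n$ is contained in ${\rm Sing}(C_n)\subset\{Z=0\}$.

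I would apply this with $P$ above a singular point $S=(\alpha:1:0)$. The line $\ell:=\overline{R,S}$ then meets $C_n$ only in $\{Z=0\}$; but $R\notin\{Z=0\}$ forces $\ell\cap\{Z=0\}=\{S\}$, so $\ell\cap C_n=\{S\}$ set-theoretically. By B\'ezout the local intersection multiplicity at $S$ equals $\deg C_n=q^{2n}(q+1)$, which strictly exceeds the multiplicity $q^{2n}$ of the ordinary singularity. Therefore $\ell$ must coincide with one of the $q^{2n}$ tangent lines $X-\alpha Y-\beta Z=0$ with $L(\beta)=0$ described in Remark \ref{rem2}(c). Writing $R=(r_1:r_2:1)$, passage through $R$ forces $\beta=r_1-\alpha r_2$, and requiring $L(r_1-\alpha r_2)=0$ for all $q+1$ values of $\alpha$ with $\alpha^{q+1}=-1$, combined with the $\mathbb{F}_{q^2}$-linearity of $L$, yields $L(r_1)=L(r_2)=0$. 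This geometric step, where the Galois hypothesis is translated into an arithmetic constraint on $R$ via the ordinary-singularity structure, is the main obstacle.

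For the final step, the subgroup $\Sigma_0$ acts freely and transitively on the locus $\Lambda:=\{(x,y):L(x)=L(y)=0\}$ of cardinality $q^{4n}$, and $\Lambda$ is preserved by every element of ${\rm Aut}(X_n)$ (each such automorphism has an $\mathbb{F}_{q^2}$-linear part and a translation in $\Sigma_0$, so the $\mathbb{F}_{q^2}$-linearity of $L$ preserves $\Lambda$). Hence the ${\rm Aut}(X_n)$-orbit of $R$ equals $\Lambda$, and by orbit-stabilizer the stabilizer of $R$ has order $|{\rm Aut}(X_n)|/q^{4n}=q(q^2-1)(q+1)$. On the other hand $|G_R|=\deg C_n=q^{2n}(q+1)$, and the quotient of the stabilizer order by $|G_R|$ equals $(q^2-1)/q^{2n-1}$, which is the non-integer $(q^2-1)/q$ when $n=1$ and is strictly less than $1$ when $n\geq 2$. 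In either case $|G_R|$ fails to divide the stabilizer order, contradicting $G_R\subseteq\mathrm{Stab}(R)$ by Lagrange. Thus $R\in\{Z=0\}$.
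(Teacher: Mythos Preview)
Your argument is correct, but it takes a considerably longer route than the paper's. Both proofs begin the same way: use Theorem~\ref{main1}(c) to see that $G_R$ preserves ${\rm supp}(D)$, and then transitivity of $G_R$ on fibers forces the entire fiber of $\pi_R\circ\varphi$ through a point $P\in{\rm supp}(D)$ to lie over the single singularity $S=\varphi(P)$, so the line $\ell=\overline{RS}$ meets $C_n$ only at $S$. From here, however, the paper finishes in one stroke: since the fiber has exactly $q^{2n}<\deg C_n$ points, the Galois cover is ramified at \emph{every} branch over $S$, which means each of the $q^{2n}$ distinct tangent lines at the ordinary singularity $S$ passes through $R$; two distinct lines through $S$ cannot share another point, so this is already a contradiction. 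You instead extract only that $\ell$ is \emph{one} tangent at $S$, then vary $S$ over all $q+1$ singularities to deduce $L(r_1)=L(r_2)=0$, and finally invoke the full structure of ${\rm Aut}(X_n)$ (its order and the explicit affine form of its elements) for an orbit--stabilizer contradiction. Your route works and the intermediate conclusion $R\in\Lambda$ is pleasant, but it relies on Theorem~\ref{main1}(d) and the explicit description of automorphisms, whereas the paper's argument needs nothing beyond assertion~(c) and the ordinary-singularity structure of Remark~\ref{rem2}.
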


\begin{proof}
Assume that $R \not\in \{Z=0\}$. 
Let $P \in {\rm supp}(D)$. 
By assertion (c) in Theorem \ref{main1}, $G_R$ acts on $\varphi^{-1}(\overline{R\varphi(P)}) \cap {\rm supp}(D)=\varphi^{-1}(\varphi(P))$ (see also \cite[Theorem 3.7.1]{stichtenoth}), where $\varphi: X_n \rightarrow C_n$ is the normalization, and $\overline{R\varphi(P)}$ is the line passing through points $R$ and $\varphi(P)$. 
This implies that for the projection $\pi_R$, the fiber of $\pi_R \circ \varphi$ over the point of $\mathbb{P}^1$ corresponding to the line $\overline{R\varphi(P)}$ coincides with $\varphi^{-1}(\varphi(P))$. 
Since $\varphi^{-1}(\varphi(P))$ consists of $q^{2n} <q^{2n} (q+1)=\deg C_n$ points, $\pi_R$ is ramified at each point of $\varphi^{-1}(\varphi(P))$ (see \cite[Corollary 3.7.2]{stichtenoth}). 
However, the directions of the tangent lines at $\varphi(P)$  differ. 
This is a contradiction. 
Therefore, $R \in \{Z=0\}$. 
\end{proof} 

The order of $G_R$ coincides with $\deg C_n=q^{2n}(q+1)$. 
Since $G_R$ can be embedded in $PGL(3, k)$, there exists a cyclic subgroup $C_R \subset G_R$ of order $q+1$ such that the set
$$ F[R]=\{Q \in \mathbb{P}^2 \ | \ \sigma(Q)=Q \ \mbox{ for all } \ \sigma \in C_R\}$$
coincides with $\{R\} \cup \ell_R$ for some line $\ell_R \not\ni R$, and that the set of all fixed points of each element for $C_R \setminus \{1\}$ coincides with $F[R]$ (see \cite[Theorem 2]{fukasawa1}, \cite[Proposition 4]{miura}). 
Since $C_R$ acts on $(\{Z=0\}(\mathbb{F}_{q^2})) \setminus {\rm Sing}(C_n)$, and $|C_R|=q+1$ does not divide  $q^2-q$, it follows that $C_R$ has short orbits. 
Therefore, $C_R$ fixes exactly two points of $(\{Z=0\}(\mathbb{F}_{q^2})) \setminus {\rm Sing}(C_n)$. 
This implies that $R \in (\{Z=0\}(\mathbb{F}_{q^2})) \setminus {\rm Sing}(C_n)$.

Let us  consider assertion (f) in Theorem \ref{main1}. 
Let $G \subset {\rm Lin}(X_n)$ be a subgroup generated by the Galois groups $G_R$ of $q^2-q$ outer Galois points $R \in \mathbb{P}^2 \setminus C_n'$. 
According to automorphisms in the Galois groups of two outer Galois points $(1:0:0)$ and $(0:1:0)$ for the plane model $C_n$, the kernel of $r$ is contained in the group $G$.  
Note that $r(C_R)$ is a cyclic group of order $q+1$ for each outer Galois point $R$, and that $r(C_R)=r(C_{R'})$ if and only if $R' \in F[R]$. 
Therefore, $r(G)$ contains $(q^2-q)/2$ cyclic groups of order $q+1$ and $r(G)$ is generated by such cyclic groups. 
Since $r(C_R)$ acts on ${\rm Sing}(C_n')$ transitively for each outer Galois point $R$, the stabilizer subgroup of $r(G)$ of $P \in {\rm Sing}(C_n')$ contains at least $(q^2-q)/2$ elements. 
Therefore, $|r(G)|=(q^3-q)/2$ or $q^3-q$, according to the order $|r({\rm Lin}(X_n))|$.
Since there exist $(q^2-q)/2$ cyclic groups of order $q+1$ contained in $V r(G) V^{-1}$, it follows that $V r(G) V^{-1} \not\subset PSL(2, \mathbb{F}_q)$, and that $V r(G) V^{-1}=PGL(2, \mathbb{F}_q)$.  
The assertion follows.

\section{Proof of Theorem \ref{main2}} 

A  polynomial  $f(x) \in \mathbb{F}_q[x]$ of degree $d\geq 1$ whose value set $V_f=\{f(\alpha)\, : \alpha  \in \mathbb{F}_q  \}$ has size $\lceil   q/d  \rceil $ is called a {\it minimal value set polynomial} (see \cite{borges, clms, mills}). 
An irreducible plane curve $C \subset \mathbb{P}^2$ is said to be {\it $q$-Frobenius nonclassical}, if the image of any smooth point $P$ of $C$ under the $q$-Frobenius map is contained in the tangent line at $P$ (see \cite{hefez-voloch, stohr-voloch}). 
Results relating such polynomials and Frobenius nonclassical curves  can be found in  \cite{borges}, and they  will be used in the proof of Theorem   \ref{main2}. 

\begin{lem}\label{help}  Let  $ L(x)=\alpha_0 x+\alpha_1 x^{q^2}+\cdots+ \alpha_{n-1}x^{q^{2(n-1)}}+x^{q^{2n}}$,
with  $\alpha_0, \ldots, \alpha_{n-1} \in \overline{\mathbb{F}}_q$ and $\alpha_0\ne 0$.   If   $F=L(x)^{q+1}\in \mathbb{F}_{p^s}[x]$  has value set
$V_F=\{0=\gamma_0, \gamma_1,\ldots,\gamma_r\}$, and  $\prod \limits_{i=0}^{r}(F(x)-\gamma_i)=\theta (x^{p^s}-x)F'(x)$ for some $\theta \in \mathbb{F}_{p^s}^*$, then there exist  positive integers $u$ and $m$, and $\omega_{0}, \omega_{1}, \ldots, \omega_{m-1} \in \mathbb{F}_{p^s}$, $\omega_{0}\neq 0$, such that  $p^s=q^{2(n+um)}$,  $r=(q^{2um}-1)/(q+1)$, and 
 \begin{equation}\label{poly-T}
 x\prod \limits_{i=1}^{r}(x-\gamma_i)=\sum \limits_{i=0}^{m-1}\omega_i x^{qt_i}+ x^{qt_m},
\end{equation}
where  $t_i=\frac{q^{2ui-1}+1}{q+1}, \quad i=0,1,\ldots, m$.
\end{lem}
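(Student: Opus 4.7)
The plan is to reduce the hypothesis to an identity about an additive polynomial and then exploit a multiplicative symmetry to pin down its sparse structure.

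First, observe $L'(x)=\alpha_0$ since every other summand of $L$ is a $p$-th power, so $F'(x)=(q+1)\alpha_0 L(x)^q$ has degree $q^{2n+1}$. Because $\gamma_0=0$, the factor $F(x)-\gamma_0=L(x)^{q+1}$ contributes an extra $L(x)^q$ that cancels with the one in $F'(x)$, recasting the hypothesis as
$$
P(L(x))=\theta'(x^{p^s}-x),\qquad \theta':=\theta(q+1)\alpha_0\in\mathbb{F}_{p^s}^{*},
$$
with $P(y):=y\prod_{i=1}^{r}(y^{q+1}-\gamma_i)$. Comparing leading coefficients yields $\deg P=p^s/q^{2n}$ and $r=(p^s-q^{2n})/(q^{2n}(q+1))$. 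Expanding $L(x)^{q+1}=L(x)^q L(x)$ produces monomials $\alpha_i^{q}\alpha_j x^{q^{2i+1}+q^{2j}}$, whose exponents (read in base $q$) have $1$'s in distinct pairs of odd and even positions and are pairwise distinct; so every coefficient of $F\in\mathbb{F}_{p^s}[x]$ equals some $\alpha_i^{q}\alpha_j$, and taking $i=n$ gives $\alpha_j\in\mathbb{F}_{p^s}$ for all $j$.

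Next, to show $P$ is $\mathbb{F}_p$-additive I identify its roots. Since $L$ is separable ($L'\ne 0$) and nonconstant, for any $y_0\in\overline{\mathbb{F}}_p$ the fibre $L^{-1}(y_0)$ has exactly $q^{2n}$ simple points, so a root of $P$ of multiplicity $e$ yields roots of $P\circ L$ of multiplicity $e$. Separability of $P\circ L=\theta'(x^{p^s}-x)$ forces $e=1$, so $P$ is separable. Evaluating the identity at $\beta\in\ker L$ gives $\beta^{p^s}=\beta$, i.e., $\ker L\subset\mathbb{F}_{p^s}$, while evaluating at $x_0\in\mathbb{F}_{p^s}$ puts $L(\mathbb{F}_{p^s})$ inside the zero set of $P$. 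Matching cardinalities, $|L(\mathbb{F}_{p^s})|=p^s/q^{2n}=\deg P$, so the roots of $P$ are exactly the $\mathbb{F}_p$-subspace $L(\mathbb{F}_{p^s})$, whence $P$ is its subspace polynomial and is additive.

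Third, I upgrade $\mathbb{F}_p$-additivity to $\mathbb{F}_{q^2}$-linearity via the multiplicative symmetry $P(y)=y\,Q(y^{q+1})$: every $\zeta\in\overline{\mathbb{F}}_p$ with $\zeta^{q+1}=1$ satisfies $P(\zeta y)=\zeta P(y)$. Writing $P(y)=\sum_j c_j y^{p^j}$ and comparing yields $c_j(\zeta^{p^j}-\zeta)=0$ for all such $\zeta$, whence $c_j\ne 0\Rightarrow (q+1)\mid p^j-1$. Setting $q=p^\ell$, a short computation (using $p^\ell\equiv -1\pmod{q+1}$ together with $p^d-1<q+1$ for $d\le\ell$ and $p^d-1\equiv-(p^{d-\ell}+1)\pmod{q+1}$ for $\ell<d<2\ell$) shows the multiplicative order of $p$ modulo $q+1$ is exactly $2\ell$; hence $c_j\ne 0\Rightarrow 2\ell\mid j$, i.e., each nonzero exponent is a power of $q^2$.

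Thus $P(y)=\sum_{k=0}^{m}\omega_k y^{q^{2k}}$ with $\omega_m=1$, $\omega_0=P'(0)=(-1)^r\prod_{i=1}^{r}\gamma_i\ne 0$, and $q^{2m}=p^s/q^{2n}$ gives $p^s=q^{2(n+m)}$ and $r=(q^{2m}-1)/(q+1)$. Taking $u=1$ and using $H(y^{q+1})=y^qP(y)$, the substitution $T=y^{q+1}$ converts $y^{q+q^{2k}}$ into $T^{(q+q^{2k})/(q+1)}=T^{qt_k}$, producing the claimed sparse expression for $H(T)=x\prod_{i=1}^{r}(x-\gamma_i)$. The main obstacle is the passage from $\mathbb{F}_p$-additivity to $\mathbb{F}_{q^2}$-linearity, which hinges on the multiplicative $\mu_{q+1}$-symmetry of $P$ together with the order computation for $p$ modulo $q+1$.
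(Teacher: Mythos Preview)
Your argument is correct and takes a genuinely different route from the paper. The paper's proof is essentially a black-box application of the structure theorems for minimal value set polynomials in \cite{borges}: it invokes \cite[Theorem~2.2]{borges} to certify that $F$ is a minimal value set polynomial, then reads off $v=q+1$, $L_0=L$, and the form of $T(x)$ from \cite[Theorem~2.1]{borges}, with a separate check for the exceptional case $(p,r)=(2,1)$.

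By contrast, you give a self-contained argument that avoids \cite{borges} entirely. After cancelling $L(x)^q$ to obtain $P(L(x))=\theta'(x^{p^s}-x)$, you show directly that $P$ is separable with root set equal to the $\mathbb{F}_p$-subspace $L(\mathbb{F}_{p^s})$, hence is a $p$-polynomial; the key new idea is then to exploit the built-in $\mu_{q+1}$-equivariance $P(\zeta y)=\zeta P(y)$, which forces every surviving exponent $p^j$ to satisfy $(q+1)\mid p^j-1$, and your order computation for $p$ modulo $q+1$ pins this down to $j\in 2\ell\mathbb{Z}$. This yields $u=1$ directly, without the case distinction $(p,r)=(2,1)$, and in fact shows one may always take $u=1$ in the lemma---a slight sharpening of the stated conclusion. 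The paper's approach is shorter if one is willing to quote \cite{borges}, but yours is more transparent and explains \emph{why} the exponents are powers of $q^2$. Two small cosmetic points: the fact that $\omega_i\in\mathbb{F}_{p^s}$ follows immediately from $\gamma_i\in V_F\subset\mathbb{F}_{p^s}$ but is worth saying explicitly, and the auxiliary observation $\alpha_j\in\mathbb{F}_{p^s}$ (from the distinctness of the exponents $q^{2i+1}+q^{2j}$) is not actually needed for the cardinality count, since rank--nullity for $L|_{\mathbb{F}_{p^s}}$ as an $\mathbb{F}_p$-linear map into $\overline{\mathbb{F}}_p$ already gives $|L(\mathbb{F}_{p^s})|=p^s/q^{2n}$.
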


\begin{proof}  From \cite[Theorem 2.2]{borges}, the condition  $\prod \limits_{i=0}^{r}(F(x)-\gamma_i)=\theta (x^{p^s}-x)F'(x)$  implies that $F=L(x)^{q+1}\in \mathbb{F}_{p^s}[x]$
is a minimal value set polynomial. The same condition also gives
\begin{equation}\label{mvsp-trivial} 
L(x)(L(x)^{q+1}-\gamma_1)\cdots (L(x)^{q+1}- \gamma_r)=\theta\alpha_0(x^{p^s}-x).
\end{equation}
Comparing degrees on \eqref{mvsp-trivial} yields $1+(q+1)r=p^s/q^{2n}$. First, let us suppose that $(p,r)\neq (2,1)$.
Thus we have  $r>1$, and  then \cite[Theorem 2.1]{borges} holds for  $F=L(x)^{q+1}\in \mathbb{F}_{p^s}[x]$. Note that  \eqref{mvsp-trivial}
implies that   $L(x)$ has $q^{2n}$ distinct roots in $\mathbb{F}_{p^s}$, and  that each $L(x)^{q+1}-\gamma_i$  has $q^{2n}(q+1)$ distinct roots in $\mathbb{F}_{p^s}$, $i=1, \dots, r$.
Thus using  (iii) and (b) of  \cite[Theorem 2.1]{borges}   and notation therein, we have  $L_0=L(x)$  and $v=q+1$.  Furthermore,   (a) of  \cite[Theorem 2.1]{borges} 
implies that $q+1$ divides $p^k-1$, and $1+(q+1)r=p^{mk}$. The first assertion gives that $p^k=q^{2u}$ for some integer $u\geq 1$, and from  the second assertion it follows that 
$p^s/q^{2n}=p^{mk}=q^{2mu}$, and then $p^s=q^{2(n+mu)}$ and $r=(q^{2um}-1)/(q+1)$, as claimed.  Finally,  from (c) of  \cite[Theorem 2.1]{borges} ,  we have
\begin{eqnarray*}
T(x)=x\prod \limits_{i=1}^{r}(x-\gamma_i) &=&x\sum_{i=0}^{m} \omega_{i} x^{(p^{k i}-1) / v} \\
                                                                  &=&\sum_{i=0}^{m} \omega_{i} x^{q(q^{2ui-1}+1) / (q+1)} \\
                                                                  &=&\sum \limits_{i=0}^{m-1}\omega_i x^{qt_i}+ x^{qt_m},
\end{eqnarray*}
where  $t_i=\frac{q^{2ui-1}+1}{q+1}, \quad i=0,1,\ldots, m$, and this gives the result when $(p,r)\neq (2,1)$. For $(p,r)=(2,1)$, the relation  $1+(q+1)r=p^s/q^{2n}$
entails  $q=p=2$ and  $s=2(n+1)$. Since  $T(x)=x(x-\gamma_1)$, equation  \eqref{poly-T} holds for $m=u=1$, and the result follows.
\end{proof}

We now proceed to prove the first part of Theorem  \ref{main2}.  Let  
$$ L(x)=\alpha_0 x+\alpha_1 x^{q^2}+\cdots+ \alpha_{n-1}x^{q^{2(n-1)}}+x^{q^{2n}},$$
with  $\alpha_0, \ldots, \alpha_{n-1} \in k$ and $\alpha_0\ne 0$,  be such that the irreducible curve
$$L(x)^{q+1}+L(y)^{q+1}+c=0$$
is $p^s$-Frobenius nonclassical. It follows from \cite[Theorem 3.4 and Corollary 3.5]{borges}  that

  \begin{enumerate}[\rm(i)]
  \item The polynomials  $L(x)^{q+1}, L(x)^{q+1}+c \in \mathbb{F}_{p^s}[x]$ are minimal value set polynomials.
  \item  The values sets $
V=\{L(\alpha)^{q+1}\,: \alpha \in \mathbb{F}_{p^s}\}  \hspace{0.1 cm}\text{ and }\hspace{0.1 cm} W=\{-L(\alpha)^{q+1}-c\,: \alpha \in \mathbb{F}_{p^s}\}
$
are equal.

  \item The  polynomial $T(x)= \prod\limits_{\gamma_i \in V=W}(x-\gamma_i)$ is such that 
  \begin{equation}\label{compos}
T(L(x)^{q+1})=\theta\alpha_0(x^{p^s}-x)L(x)^q, \quad \text{ for some } \theta \in \mathbb{F}_{p^s}^*.
  \end{equation}
  \end{enumerate}
From Lemma \ref{help},  there exist  positive integers $u$ and $m$, and $\omega_{0}, \omega_{1}, \ldots, \omega_{m-1} \in \mathbb{F}_{p^s}$, $\omega_{0}\neq 0$, such that  $p^s=q^{2(n+um)}$, and 
 $$T(x)=x\prod \limits_{i=1}^{r}(x-\gamma_i)=\sum \limits_{i=0}^{m-1}\omega_i x^{qt_i}+ x^{qt_m}
$$
where  $t_i=\frac{q^{2ui-1}+1}{q+1}, \quad i=0,1,\ldots, m$.  Note that (ii) yields $T(-x)=(-1)^{r+1} T(x-c) $, and then the expansion
$$(x-c)^{qt_m}=x^{qt_m}-c^qx^{q(t_m-1)}+\cdots+(-c)^{qt_m}
$$
implies that $t_m=1$, as    $T(x)$ has no monomial of positive degree $q(t_m-1)>qt_i$,   $i\leq m-1.$
Hence  $u=m=1$,  and then  $p^s=q^{2(n+1)}$ and   $T(x)=x^q+\omega_0 x$.
Note that the condition  $-c \in V$  given by (ii) implies that $c \in V$, as $T(c)=-T(-c)=0$. Let $\alpha \in \mathbb{F}_{q^{2(n+1)}}^*$
be such that   $L(\alpha)^{q+1}=c$, and let us  define  $\beta:=1/L(\alpha)$. It is clear that  $T(1/\beta^{q+1})=0$, and then  $T(x)=x^q-1/\beta^{q^2-1}x$.   Therefore,  since $L$ and $T$  are monic, equation \eqref{compos} yields
 \begin{equation}\label{poly-L}
L(x)^{q^2}-1/\beta^{q^2-1}L(x)=x^{q^{2(n+1)}}-x.
\end{equation}
That is, $L(x)$ is a root of $P(T)=T^{q^2}-1/\beta^{q^2-1}T-(x^{q^{2(n+1)}}-x) \in \mathbb{F}_{q^{2(n+1)}}[x][T]$. One can  check that
$\big\{ \sum\limits_{i=0}^{n}\beta^{q^{2(i+1)}-1} x^{q^{2i}}+\lambda/\beta\, : \lambda \in \mathbb{F}_{q^2}\big\}$ is the set of  all  roots of $P(T)$, and since 
$L(0)=0$, it follows that   $L(x)=\sum\limits_{i=0}^{n}\beta^{q^{2(i+1)}-1} x^{q^{2i}}$.   Note that
$$ \beta L(x/\beta^{q^2})= x+x^{q^2}+\cdots+x^{q^{2n}},$$
and $\beta^{q+1}c =1$  imply that $C_n$ is $\mathbb{F}_{q^{2(n+1)}}$-projectively equivalent to the curve of affine equation
$$(x+x^{q^2}+\cdots+x^{q^{2n}})^{q+1}+(y+y^{q^2}+\cdots+y^{q^{2n}})^{q+1}+1=0.$$
Conversely, if  $L(x)=x+x^{q^2}+\cdots+x^{q^{2n}}$, then clearly $L(x)^{q+1}$ and $-L(y)^{q+1}-1$  are minimal value set polynomials  with the same value set    $V=\mathbb{F}_{q}\subset \mathbb{F}_{q^{2(n+1)}}$. Therefore, from  \cite[Corollary 3.5]{borges}, the curve   $L(x)^{q+1}+ L(y)^{q+1}+1=0$ is $q^{2(n+1)}$-Frobenius nonclassical.

To prove assertion (a),  let us recall that $C_n$ is a curve of degree $d=q^{2n}(q+1)$ with $q+1$ ordinary sigularities $P_i \in C_n(\mathbb{F}_{q^{2(n+1)}})$ of multiplicity $m_{P_i}=q^{2n}$ (Remark \ref{rem2}).
Therefore, \cite[Corollary 3.1]{borges-homma} gives
\begin{eqnarray*}
\#X_n(\mathbb{F}_{q^{2(n+1)}})&=&d(q^{2(n+1)}-d+2)+\sum_{i=1}^{q+1} m_{P_i}\left(m_{P_i}-1\right) \\
                                                                  &=&q^{2n}(q+1)(q^{2(n+1)}-q^{2n}(q+1)+2)+(q+1) q^{2n}\left(q^{2n}-1\right)  \\
                                                                  &=&q^{4n+3}-q^{4n+1}+q^{2n+1}+q^{2n}.
\end{eqnarray*}

To prove assertion (b),  let us recall that  that  each of the   $q+1$ ordinary sigularities $P_i \in C_n(\mathbb{F}_{q^{2(n+1)}})$ has all of its $q^{2n}$ tangent lines defined over $\mathbb{F}_{q^{2(n+1)}}$ (Remark \ref{rem2}).
This implies that each such singularity gives rise to $q^{2n}$ points of  $X_n(\mathbb{F}_{q^{2(n+1)}})$. Therefore,
\begin{eqnarray*}
\#C_n(\mathbb{F}_{q^{2(n+1)}})&=&\#X_n(\mathbb{F}_{q^{2(n+1)}}) -(q+1)(q^{2n} -1)\\
                                                                  &=&q^{4n+3}-q^{4n+1}+q+1.
\end{eqnarray*}

We proceed to prove assertion (c). See \cite{gptu} for the definition of a $(k, d)$-arc (arising from a plane curve) and the arc property. 
Note that there exists an $\mathbb{F}_{q^{2(n+1)}}$-line containing an outer Galois point $(1:0:0)$, which is not a tangent line. 
Therefore, our arc forms a $(q^{4n+3}-q^{4n+1}+q+1, q^{2n+1}+q^{2n})$-arc (see also \cite[Lemma 2.2]{gptu}).  
For the incompleteness of our arc, we prove the following.

\begin{prop} 
Let $ \lambda\neq 0$ be a root of $L(x)$, and let $T_\lambda:=\{\alpha/\lambda: L(\alpha)=0\}$.  For $a \in \mathbb{F}_{q^{2(n+1)}} \setminus T_\lambda$ and $R=(a:1:0)$,
the  $\mathbb{F}_{q^{2(n+1)}}$-lines $\ell \ne \{Z=0\}$ passing through $R$ intersect $ C_n(\mathbb{F}_{q^{2(n+1)}})$ in less than $q^{2n}(q+1)$ points.
\end{prop}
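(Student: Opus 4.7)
The plan is to parametrize the $\mathbb{F}_{q^{2(n+1)}}$-lines through $R=(a:1:0)$ distinct from $\{Z=0\}$, reduce the count of rational intersections with $C_n$ to a fibered count over the affine Hermitian curve $\tilde u^{q+1}+\tilde v^{q+1}+1=0$ over $\mathbb{F}_{q^2}$, and then conclude by a strict inequality.

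First I would write every such line as $\ell_b\colon x=ay+b$ with $b\in\mathbb{F}_{q^{2(n+1)}}$. The singular points of $C_n$ at infinity are the $(\alpha:1:0)$ with $\alpha^{q+1}=-1$, so in particular $\alpha\in\mathbb{F}_{q^2}$; moreover $T_\lambda\supseteq\mathbb{F}_{q^2}$, because $\lambda\in\ker L$ and $\ker L$ is an $\mathbb{F}_{q^2}$-subspace, giving $\mathbb{F}_{q^2}\cdot\lambda\subseteq\ker L$ and hence $\mathbb{F}_{q^2}=\lambda^{-1}(\mathbb{F}_{q^2}\cdot\lambda)\subseteq T_\lambda$. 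Therefore the hypothesis $a\notin T_\lambda$ forces $R\notin C_n$, and $\ell_b\cap C_n(\mathbb{F}_{q^{2(n+1)}})$ consists only of affine points $(ay+b,y)$ with $y\in\mathbb{F}_{q^{2(n+1)}}$ satisfying $L(ay+b)^{q+1}+L(y)^{q+1}+1=0$.

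Next I would exploit the key identity $L(x)^{q^2}-L(x)=x^{q^{2(n+1)}}-x$, a telescoping consequence of the normalized form $L(x)=x+x^{q^2}+\cdots+x^{q^{2n}}$. It shows that $L$ restricts to an $\mathbb{F}_{q^2}$-linear surjection $\mathbb{F}_{q^{2(n+1)}}\twoheadrightarrow\mathbb{F}_{q^2}$ with kernel of order $q^{2n}$. Using the additivity $L(ay+b)=L(ay)+L(b)$ and setting $\tilde u=L(ay)+L(b)$ and $\tilde v=L(y)$, both in $\mathbb{F}_{q^2}$, the defining equation transforms into the affine Hermitian equation $\tilde u^{q+1}+\tilde v^{q+1}+1=0$ over $\mathbb{F}_{q^2}$, which has exactly $q^3-q$ solutions.

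The crux is to show that the $\mathbb{F}_{q^2}$-linear map $\Phi\colon\mathbb{F}_{q^{2(n+1)}}\to\mathbb{F}_{q^2}^2$ defined by $y\mapsto(L(ay),L(y))$ is surjective. Otherwise, $L(ay)=cL(y)$ for some $c\in\mathbb{F}_{q^2}$ and all $y\in\mathbb{F}_{q^{2(n+1)}}$; then the polynomial $\sum_i(a^{q^{2i}}-c)y^{q^{2i}}$ vanishes identically on $\mathbb{F}_{q^{2(n+1)}}$, and since its degree $\leq q^{2n}$ is strictly smaller than $q^{2(n+1)}$, every coefficient must vanish, giving $a=c\in\mathbb{F}_{q^2}\subseteq T_\lambda$, a contradiction. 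Hence $\Phi$ is surjective with fibers of cardinality $q^{2n-2}$, and $|\ell_b\cap C_n(\mathbb{F}_{q^{2(n+1)}})|=(q^3-q)\cdot q^{2n-2}=q^{2n+1}-q^{2n-1}$, strictly less than $q^{2n+1}+q^{2n}=q^{2n}(q+1)$. The main obstacle is the surjectivity step; once it is secured, the counting and the final inequality are immediate, and the bound is independent of $b$.
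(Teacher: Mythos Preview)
Your proof is correct but follows a genuinely different route from the paper. The paper argues by contradiction: if some $\mathbb{F}_{q^{2(n+1)}}$-line $\ell_m$ through $R$ met $C_n$ in $q^{2n}(q+1)$ rational points, then the translations $\sigma_{\beta,\gamma}$ with $L(\beta)=0$ and $\gamma^{q^2}=\lambda^{q^2-1}\gamma$ send $\ell_m$ to $q^{2(n+1)}$ \emph{distinct} lines through $R$ (distinctness is exactly where $a\notin T_\lambda$ is used), each meeting $C_n$ in $q^{2n}(q+1)$ affine rational points, which would force $|C_n(\mathbb{F}_{q^{2(n+1)}})|\ge (q+1)(q^{4n+2}+1)$ and contradict the point count already established. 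Your argument is direct and more quantitative: using the surjection $L:\mathbb{F}_{q^{2(n+1)}}\twoheadrightarrow\mathbb{F}_{q^2}$ you reduce the intersection count on every $\ell_b$ to a fiber count over the affine Hermitian curve, obtaining the exact value $q^{2n+1}-q^{2n-1}$, independent of $b$. The paper's method leans on the automorphism group and the previously computed $|C_n(\mathbb{F}_{q^{2(n+1)}})|$, fitting the overall narrative; your method is self-contained and yields sharper information. One small point worth making explicit in your surjectivity step: since $L$ (and hence the second coordinate of $\Phi$) already surjects onto $\mathbb{F}_{q^2}$, a non-surjective $\Phi$ would have one-dimensional image projecting onto the second factor, hence a graph $\{(cv,v)\}$; you use this implicitly when you write $L(ay)=cL(y)$.
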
 

\begin{proof} 
For $a \in \mathbb{F}_{q^{2(n+1)}} \setminus T_\lambda$, let us consider the lines $\ell_m: x=a y+m$, $m \in \mathbb{F}_{q^{2(n+1)}}$, passing through $R=(a:1:0)$. 
Suppose the number of points of $(\ell_m \cap C_n)(\mathbb{F}_{q^{2(n+1)}})$ is equal to $q^{2n}(q+1)$ for some $m \in \mathbb{F}_{q^{2(n+1)}}$. Thus, for 
$L_\lambda(x)=x^{q^2}-\lambda^{q^2-1}x$,  the $q^{2(n+1)}$ elements of 
$$ \{ \sigma_{\beta, \gamma}: (x, y) \mapsto (x+\beta, y+\gamma) \ | \ L(\beta)=L_\lambda(\gamma)=0 \} $$
give rise to a pencil of $q^{2(n+1)}$ distinct lines $\ell_{m+\beta-a\gamma}\ni R$ defined over $\mathbb{F}_{q^{2(n+1)}}$ intersecting $C_n$ in $q^{2(n+1)}\cdot q^{2n}(q+1)$ points in $\mathbb{A}^2(\mathbb{F}_{q^{2(n+1)}})$, as $\ell_{m+\beta-a\gamma}=\ell_m$ if and only if $\beta=a\gamma$, that is, $a \in T_\lambda$. 
Therefore, the set $C_n(\mathbb{F}_{q^{2(n+1)}})$ consists of at least $(q+1)(q^{4n+2}+1)$ points. 
This is a contradiction to assertion (c). 
\end{proof}

Assertion (d) in Theorem \ref{main2} is verified.

\begin{rem}
The irreducibility of the curve defined by 
$$ (x^{q^n}+x^{q^{n-1}}+\cdots+x^q+x)^\frac{q-1}{q'-1}+(y^{q^n}+y^{q^{n-1}}+\cdots+y^q+y)^\frac{q-1}{q'-1}+c=0 $$
and its genus are determined in \cite{fukasawa2}, where $q$ and $q'$ are powers of $p$, $q'-1$ divides $q-1$, and $c \in \mathbb{F}_{q'}\setminus \{0\}$. 
Similarly to assertion (a) in Theorem \ref{main2}, it can be proved that this curve is $q^{n+1}$-Frobenius nonclassical.  
\end{rem}

\begin{center}
{\bf Acknowledgments} 
\end{center}
The second author is grateful to Doctor Kazuki Higashine for helpful discussions.

\end{document}